\newtheorem{theorem}[]{Theorem}
\newtheorem{lemma}[]{Lemma}
\newtheorem{corollary}[]{Corollary}
\theoremstyle{definition}
\newtheorem{definition}[]{Definition}
\theoremstyle{remark}
\newtheorem{question}[]{Question}
\setlist[enumerate]{
    label=\textnormal{(\arabic*)}, 
}
\newlist{enumloc}{enumerate}{1}
\newlist{enumglob}{enumerate}{1}
\setlist[enumloc]{
    label=\textnormal{(\textit{L}\arabic*)}, 
  }
\setlist[enumglob]{
  label=\textnormal{(\textit{G}\arabic*)}, 
}
\newcommand{\defiff}{\stackrel{\mbox{\footnotesize \textrm{def}}}{\iff}}
\title{How to escape Tennenbaum's theorem}
\author{Fedor Pakhomov}\thanks{Research of Fedor Pakhomov was supported by FWO grant G0F8421N}
 \address{Vakgroep Wiskunde: Analysis, Logic and Discrete Mathematics, 
Ghent University,
Krijgslaan 281,
B9000~~Ghent,
Belgium\newline
and Steklov Mathematical Institute of Russian Academy of Sciences, 
Gubkina 8,
119991 Moscow, Russia}
\email{fedor.pakhomov@ugent.be}
\date{September 2022}
\begin{document}

\maketitle
\begin{abstract}
    We construct a theory definitionally equivalent to first-order Peano arithmetic $\mathsf{PA}$ and a non-standard computable model of this theory. The same technique allows us to construct a theory definitionally equivalent to Zermelo-Fraenkel set theory $\mathsf{ZF}$ that has a computable model.
\end{abstract}
\section{Introduction}
A celebrated result of Stanley Tennenbaum \cite{tennenbaum1959non} states there are no computable non-standard models of first-order Peano arithmetic $\mathsf{PA}$. There have been significant amount of research about generalizations of the theorem  (see in particular \cite{shepherdson1964non,wilmers1985bounded,berarducci1996recursive,godziszewski2017computable}) and about its philosophical implications (see in particular \cite{halbach2005computational,button2012philosophical,quinon122007intended}). However, to the best of the author's knowledge most of the research in this direction was focused on the arithmetical signature. 

The theory $\mathsf{PA}$ is widely used in mathematical logic as a theory allowing to formalize reasoning by mathematical induction about finite objects of various kinds. But since $\mathsf{PA}$ directly speaks only about natural numbers, the finite objects that aren't natural numbers are coded there by natural numbers. However, the choice of natural numbers as the base kind of finite objects that is used to code other kinds appears to be just a consequence of the historical development of the field. There are some works investigating formal theories using some other base kinds of finite objects, in particular theories that use hereditary finite sets \cite{sazonov1997bounded,kaye2007interpretations} and theories using binary strings \cite{ferreira1988polynomial}.

In general, the property of two theories to be the same up to the choice of base notions is captured by the notion of \emph{bi-interpretability} of theories that was introduced by Ahlbrandt and Ziegler \cite{ahlbrandt1986quasi}. In this paper we use an even stricter notion of sameness of theories called \emph{definitional equivalence} (also known as \emph{synonymy}) that was introduced by de Bouvère \cite{de1965logical}. We note that it was proved by Friedman and Visser \cite{friedman2014bi} that for a wide range of theories (including $\mathsf{PA}$ and $\mathsf{ZF}$) bi-interpretations could be transformed to definitional equivalences.

One particular theory that is definitionally equivalent to $\mathsf{PA}$ is an appropriate theory of hereditary finite sets that we denote $\mathsf{ZF}_{\mathsf{fin}}^+$ (see a paper by Kaye and Wong \cite{kaye2007interpretations}). It was proved by Enayat, Schmerl, and Visser that there are no non-standard computable models of $\mathsf{ZF}_{\mathsf{fin}}^+$. There is an even stronger result by Godziszewski and Hamkins \cite{godziszewski2017computable} that there are no computable quotient presentations of non-standard models of $\mathsf{ZF}_{\mathsf{fin}}^+$.

In this paper we show that it is possible to construct a theory definitionally equivalent to $\mathsf{PA}$ with a non-standard computable model (Corollary \ref{computable_model_of_PA}). We obtain this as a corollary of a more general result. We construct certain theory $T_0$ that is definitionally equivalent to the theory $\mathsf{ZF}_{-\mathsf{inf}}^+$ (that is $\mathsf{ZF}$ with removed axiom of infinity and added axiom of transitive closure). We show that any consistent c.e. extension of $T_0$ has a computable model (Theorem \ref{main_theorem}). To get Corollary \ref{computable_model_of_PA} from this we use the fact that  $\mathsf{ZF}_{-\mathsf{inf}}^+\subseteq \mathsf{ZF}_{\mathsf{fin}}^+$. And since $\mathsf{ZF}_{-\mathsf{inf}}^+\subseteq \mathsf{ZF}$, Theorem \ref{main_theorem} also allows to construct a theory with computable models that is definitionally equivalent to $\mathsf{ZF}$ (Corollary \ref{computable_model_of_ZF}). Finally we establish a limit of the phenomenon of computable non-standard models in alternative signatures and show that no theory definitionally equivalent to the true arithmetic has a computable non-standard model (Theorem \ref{true_arithmetic}).

\section{Preliminaries}\label{preliminaries}
\begin{definition}
A computable model of a finite signature is a model $M$, whose domain is a computable set of naturals and the interpretations in $M$ of all functions and predicates are computable functions and sets of tuples, respectively. 
\end{definition}

The theories that we consider in this paper are first-order theories with equality.

\begin{definition}
A theory $T$ is a definitional extension of a theory $U$ if
\begin{enumerate}
    \item the signature of $T$ extends the signature of $U$;
    \item $T\vdash \varphi\iff U\vdash\varphi$, for all $U$-sentences $\varphi$;
    \item for any $T$-predicate $P(\vec{x})$ there is a $U$-formula $\mathsf{D}_P(\vec{x})$ such that $$T\vdash \forall\vec{x}\;( P(\vec{x})\mathrel{\leftrightarrow} \mathsf{D}_P(\vec{x}));$$
    \item for any $T$-function $f(\vec{x})$ there is a $U$-formula $\mathsf{D}_f(\vec{x},y)$ such that $$T\vdash \forall\vec{x}\;( f(\vec{x})=y\mathrel{\leftrightarrow} \mathsf{D}_f(\vec{x},y)).$$
\end{enumerate}
Naturally for a given theory $U$ we obtain a definitional extension $T$ by giving first-order definitions $\mathsf{D}_P$,$\mathsf{D}_f$ of all additional predicate and function symbols of the designated signature of $T$.
\end{definition}

\begin{definition}
Theories $T$ and $U$ with disjoint signatures are definitionally equivalent if there is a theory $V$ that is both a definitional extension of $T$ and a definitional extension of $U$. In general theories $T$ and $U$ are definitionally equivalent if so are theories $T'$ and $U'$ obtained from $T$ and $U$ by renaming of the symbols of their signatures to make them disjoint.
\end{definition}

\begin{definition}Let $\mathsf{ZF}_{-\mathsf{inf}}$ be the theory axiomatized by axioms of extensionality, pair, union, power set, and regularity as well as the schemes of separation and replacement. Theory $\mathsf{ZF}_{\mathsf{fin}}$ extends $\mathsf{ZF}_{-\mathsf{inf}}$ by the negation of the axiom of infinity. The axiom of transitive closure $\mathsf{TC}$ asserts that any set is contained in a transitive set. We denote as $\mathsf{ZF}_{-\mathsf{inf}}^+$ and $\mathsf{ZF}_{\mathsf{fin}}^+$ the extensions  by $\mathsf{TC}$ of $\mathsf{ZF}_{-\mathsf{inf}}$ and $\mathsf{ZF}_{\mathsf{fin}}$, respectively.\end{definition}

The key feature of  $\mathsf{ZF}_{-\mathsf{inf}}^+$ is that it proves that any set lies at some level of von Neumann hierarchy $V_\alpha$.

By the result of Kaye and Wong \cite{kaye2007interpretations} first-order Peano arithmetic $\mathsf{PA}$ is definitionally equivalent to the theory $\mathsf{ZF}_{\mathsf{fin}}^+$. For the reader convenience we sketch the proof of definitional equivalence. In $\mathsf{PA}$ Ackerman's membership predicate $n\in_{\mathsf{ack}} m$ expresses that $s_n=0$,  for any presentation of $m$ in the form $m=\sum\limits_{i\le k}s_i2^k$, where  $k\ge n$ and $s_0,\ldots,s_k\in\{0,1\}$. We obtain a definitional extension $T$ of $\mathsf{PA}$ by adding to $\mathsf{PA}$ the predicate symbol $x\in y$ and the axiom $\forall x,y\;(x\in_{\mathsf{ack}} y\mathrel{\leftrightarrow} x\in y)$. It is easy to see that extension contains $\mathsf{ZF}_{\mathsf{fin}}^+$. In $\mathsf{ZF}_{\mathsf{fin}}^+$ we define by recursion over $\in$ a bijection $\mathsf{iack}\colon V\to \mathsf{Ord}$  (here $V$ is the same as the class of all hereditary finite sets and $\mathsf{Ord}$ is the class of all finite ordinals)
$$\mathsf{iack}(x)=\sum_{y\in x} 2^{\mathsf{iack}(y)}.$$
This allows us to produce set-theoretic definitions for the arithmetical signature $0_{\mathsf{iack}}=\emptyset$, $1_{\mathsf{iack}}=\{\emptyset\}$, $x+_{\mathsf{iack}}y=\mathsf{iack}^{-1}(\mathsf{iack}(x)+\mathsf{iack}(y))$, and $x\times_{\mathsf{iack}}y=\mathsf{iack}^{-1}(\mathsf{iack}(x)\mathsf{iack}(y))$. This definitions produce a definitional extension $T'$ of $\mathsf{ZF}_{\mathsf{fin}}^+$ to the language of $\mathsf{PA}$. Clearly, $T'$ proves all the axioms of $\mathsf{PA}$ and the fact that $\in_{\mathsf{ack}}$ coincides with $\in$. Thus $T'$ proves all axioms of $T$. On the other hand clearly $T$ proves that $0_{\mathsf{iack}},1_{\mathsf{iack}},+_{\mathsf{iack}},$ and $\times_{\mathsf{iack}}$ coincide with $0,1,+,$ and $\times$, respectively. Hence $T$ proves all axioms of $T'$. Therefore $T=T'$ is a joint definitional extension of $\mathsf{PA}$ and $\mathsf{ZF}_{\mathsf{fin}}^+$.

\section{A ternary alternative for the membership predicate}
The goal of this section is to define a theory $T_0$ definitionally equivalent to $\mathsf{ZF}_{-\mathsf{inf}}^+$ that we latter prove to have computable models for any of its consistent c.e. extensions.

\begin{definition}
An \emph{$S$-structure} is a finite structure in the language with one ternary predicate $S$. An \emph{$(S,\in)$-structure} is a finite structure $A$ in the language with a ternary predicate $S$ and a binary predicate $\in$ such that for any $a,b,c\in A$, if $A\models a\in b$, then $A\models S(a,b,c)$.
\end{definition}
\begin{definition}For structures $A,B$ having $S$ in their signature an $S$-\emph{embedding} $f\colon A\to B$ is a map from the domain of $A$ to the domain of $B$ such that $$A\models S(a,b,c)\iff B\models S(f(a),f(b),f(c))\text{, for any }a,b,c\in A.$$ For structures $A,B$ having $S$ and $\in$ in their signatures, an $(S,\in)$-\emph{embedding} $f\colon A\to B$ is a map from the domain of $A$ to the domain of $B$ that is an $S$-embedding such that $$A\models a\in b\iff B\models f(a)\in f(b)\text{, for any }a,b\in A.$$ For $S$-structures ($(S,\in)$-structures) $A,B$ we say that $B$ \emph{extends} $A$ and write $A\subseteq B$ if the domain of $A$ is a subset of the domain of $B$ and the identity function from the domain of $A$ to the domain of $B$ is an $S$-embedding ($(S,\in)$-embedding) of $A$ into $B$.

\end{definition}
\begin{definition}Working in $\mathsf{ZF}_{-\mathsf{inf}}^+$ we call an $(S,\in)$-structure $A$ an $\in$-\emph{absolute} structure, if $$a\in b\iff A\models a\in b\text{, for all }a,b\in A.$$ 
\end{definition}

\begin{definition}
We say that an $(S,\in)$-structure $B$ \emph{neutrally extends} $A$ by an element $v$ and write $A\lessdot_v B$ if the domain of $B$ contains exactly one additional element $v$ and $B\models \forall x(x\not\in v\land v\not\in x)$. 
\end{definition}

We are working in $\mathsf{ZF}_{-\mathsf{inf}}^+$ to define a ternary relation $\mathsf{S}$. For this we will define by transfinite recursion on $\alpha$ ternary relations $\mathsf{S}_\alpha$ on $V_{6\alpha}$. Then will put $\mathsf{S}$ to be the union of $\bigcup\limits_{\alpha\in\mathsf{Ord}}\mathsf{S}_\alpha$.

The relation $\mathsf{S}_0$ is empty and for limit $\lambda$ we put $\mathsf{S}_\lambda=\bigcup\limits_{\alpha<\lambda} \mathsf{S}_\alpha$.

Now let us define $\mathsf{S}_{\alpha+1}$ from $\mathsf{S}_\alpha$. We put  $$(a,b,c)\in S_{\alpha+1}\defiff (a,b,c)\in S_\alpha\text{, for }a,b,c\in V_{6\alpha}.$$ We consider all the pairs of $(S,\in)$-structures $A,B$ such that $A\lessdot_v B$, for some $v$, the domains of $A$ and $B$ are included in $V_{6\alpha}$, $A$ is $\in$-absolute, and $S^A$ coincides with the restriction of $\mathsf{S}_\alpha$ to the domain of $A$. We consider $f_{\alpha,A,B}\colon B\to V_{6\alpha+6}$ that keeps all $a\in A$ in place and maps the unique $v\in B\setminus A$ to $(6\alpha+3, (A,B))$, where we denote by $(x,y)$ the Kuratowski pair $\{\{x\},\{x,y\}\}$. We put $$(f_{\alpha,A,B}(a),f_{\alpha,A,B}(b),f_{\alpha,A,B}(c))\in S_\alpha\defiff (a,b,c)\in B\text{, for all }a,b,c\in B.$$ For the rest of the triples $a,b,c\in V_{6\alpha+6}$ we always put $(a,b,c)\in S_{\alpha+1}$.

We verify the correctness of the definition of $S_{\alpha+1}$ by induction on $\alpha$. We need to check that when defining $S_{\alpha+1}$ we haven't assigned some triple $(a,b,c)$ simultaneously to be an element of $S_{\alpha+1}$ and not to be an element of $S_{\alpha+1}$. The definition above immediately implies that for all the considered triples $A\lessdot_v B$ the sets $f_{A,B,\alpha}(v)$ are pairwise distinct. Also, it is easy to see that a set $f_{A,B,\alpha}(v)$ always has the rank $6\alpha+5$ (i.e. $f_{A,B,\alpha}(v)\in V_{6\alpha+6}$, but $f_{A,B,\alpha}(v)\not\in V_{6\alpha+5}$). Thus the definition of $S_{\alpha+1}$ doesn't have collisions. Also note that sets $f_{A,B,\alpha}(v)$ clearly don't have sets from $V_{6\alpha}$ as their elements. Hence the maps $f_{\alpha,A,B}$ are $(S,\in)$-embeddings of $B$ into the class-size structure $(V;\mathsf{S},\in)$.

\begin{definition}
Let $T_1$ be the definitional extension of $\mathsf{ZF}_{-\mathsf{inf}}^+$, where we add a ternary predicate $S$ together with the defining axioms $S(x,y,z)\mathrel{\leftrightarrow}\mathsf{S}(x,y,z)$. Let $T_0$ be the restriction of $T_1$ to the language with just $S$.
\end{definition}

\begin{definition} We will be interested in $(S,\in)$-structures $A$ whose domain consists only of constant symbols. For structures of this form we denote as $\mathsf{Diag}(A)$ the atomic diagram of $A$, i.e. the set of all true in $A$ atomic sentences and negations of atomic sentences that additionally to predicates $=,S,\in$ could use constants from $A$.\end{definition}

Using the fact that verifiable in  $\mathsf{ZF}_{-\mathsf{inf}}^+$ the maps  $f_{\alpha,A,B}$ are $(S,\in)$-embeddings of $B$ into the class-size structure $(V,\mathsf{S},\in)$ we get the following lemma.
\begin{lemma}\label{univ} For $(S,\in)$-structures $A\lessdot_v B$ whose domains consist of constants  $$T_0\vdash \exists x\Big(\bigwedge \mathsf{Diag}(A)\to\big(\bigwedge \mathsf{Diag}(B)\big)[v\mapsto x]\Big).$$\end{lemma}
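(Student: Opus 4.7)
The plan is to work in the definitional extension $T_1$, where $S$ literally denotes $\mathsf{S}$, and to read off the required witness $x$ from the recursive construction of $\mathsf{S}$. By the classical equivalence $\exists x\,(P\to Q(x)) \leftrightarrow (P\to \exists x\,Q(x))$, it suffices, fixing any interpretation of the constants of $A$ satisfying $\bigwedge\mathsf{Diag}(A)$, to exhibit an $x$ making $(\bigwedge\mathsf{Diag}(B))[v\mapsto x]$ hold in $(V;\mathsf{S},\in)$. Under such an interpretation, the constants of $A$ present a concrete $(S,\in)$-substructure $A^*$ of the class-size structure $(V;\mathsf{S},\in)$, and the diagram's $\in$-clauses and $S$-clauses say exactly that $A^*$ is $\in$-absolute and that $S^{A^*}$ equals $\mathsf{S}$ restricted to $A^*$.

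I would then pick an ordinal $\alpha$ large enough that $A^*\subseteq V_{6\alpha}$ and that $V_{6\alpha}\setminus A^*$ contains some element $u$. Let $B'$ be the unique $(S,\in)$-structure on the domain $A^*\cup\{u\}$ isomorphic to $B$ via the identity on $A^*$ together with $v\mapsto u$; then $A^*\lessdot_u B'$, and by the preceding observations about $A^*$, the pair $(A^*,B')$ is one of those considered in the recursion clause producing $\mathsf{S}_{\alpha+1}$ from $\mathsf{S}_\alpha$. The verification given just before the lemma then shows that the corresponding map $f_{\alpha,A^*,B'}$, which sends $u$ to
$$x := (6\alpha+3,(A^*,B')),$$
is an $(S,\in)$-embedding of $B'$ into $(V;\mathsf{S},\in)$. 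Consequently every atomic formula of $\mathsf{Diag}(B)$, with $v$ replaced by $x$, holds in $(V;\mathsf{S},\in)$.

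The main delicate point is already absorbed in the construction: the verification that the ad hoc definition of $\mathsf{S}_{\alpha+1}$ does not collide with previously assigned triples, handled earlier using that $x$ has rank exactly $6\alpha+5$ and that the pair $(A^*,B')$ is recovered uniquely from the tag. Everything else---existence of $\alpha$ and of $u$, the construction of $B'$, and the read-off that $f_{\alpha,A^*,B'}$ transports the atomic facts of $B$ to atomic facts of $(V;\mathsf{S},\in)$---is routine and carried out entirely inside $\mathsf{ZF}_{-\mathsf{inf}}^+$. Finally, the passage from $T_1$ back to $T_0$ required by the statement is straightforward: after replacing $\in$ by its $T_0$-definition, the conclusion becomes an $S$-only sentence, and any $S$-sentence provable in $T_1$ is by construction provable in $T_0$.
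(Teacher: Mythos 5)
Your proof is correct and takes essentially the same route as the paper, which derives the lemma in one line from the fact, verified just before its statement, that the maps $f_{\alpha,A,B}$ are $(S,\in)$-embeddings of $B$ into the class-size structure $(V;\mathsf{S},\in)$ --- precisely the construction you spell out (interpret $\mathsf{Diag}(A)$ as giving an $\in$-absolute $A^*$ with $S^{A^*}=\mathsf{S}\upharpoonright A^*$, pick $\alpha$ large, and take $x=f_{\alpha,A^*,B'}(u)$). Your closing remark on descending from $T_1$ to $T_0$ by eliminating $\in$ is also the right way to read the statement.
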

From the definition of $\mathsf{S}$ we easily get the following lemma.
\begin{lemma}\label{only_good} Suppose $C$ is a finite set of constants. Let $\mathbf{S}(C)$ be the set of all $(S,\in)$-structures with the domain $C$. Then
  $$T_1\vdash \bigwedge\limits_{c_1,c_2\in C,c_1\ne c_2}c_1\ne c_2\to \bigvee\limits_{A\in \mathsf{S}(C)}\bigwedge\mathsf{Diag}(A).$$
\end{lemma}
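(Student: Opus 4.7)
The plan is to reduce the statement to the single universal claim that $T_1\vdash \forall a\,b\,c\,(a\in b \to S(a,b,c))$, and then to verify this by transfinite induction on rank inside $T_1$. Once the reduction is established, the lemma follows immediately: assuming pairwise distinct interpretations of the constants in $C$, the substructure $A^*$ induced by $(V;\mathsf{S},\in)$ on those interpretations is a finite $(S,\in)$-structure---the only axiom to verify is precisely the universal claim above---so $A^*\in\mathbf{S}(C)$, and the atomic diagram of $A^*$ holds by construction, making $\bigwedge\mathsf{Diag}(A^*)$ one of the disjuncts on the right-hand side.

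To prove the universal claim, I work inside $T_1$, fix $a,b,c$ with $a\in b$, and pick the minimal ordinal $\alpha$ with $a,b,c\in V_{6\alpha}$. By the union clause at limits such $\alpha$ is a successor $\beta+1$. I then inspect the definition of $\mathsf{S}_{\beta+1}$: either $(a,b,c)$ falls under the default ``rest of the triples'' clause, in which case $S(a,b,c)$ holds trivially; or $(a,b,c)$ lies in $(f_{\beta,A,B}[B])^3$ for some admissible pair $A\lessdot_v B$, in which case the defining clause sets $(a,b,c)\in\mathsf{S}_{\beta+1}$ iff $B\models S(a',b',c')$, where $a',b',c'$ are the $f_{\beta,A,B}$-preimages.

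In the second case I use the hypothesis $a\in b$ together with rank considerations. Let $v^{*}=f_{\beta,A,B}(v)=(6\beta+3,(A,B))$; the paper already observed that $v^{*}$ has rank exactly $6\beta+5$. If $a=v^{*}$, then $b\in V_{6\beta}\cup\{v^{*}\}$; but $v^{*}\in v^{*}$ fails by regularity and $v^{*}\in b$ for $b\in V_{6\beta}$ fails by rank. If $b=v^{*}$ and $a\in V_{6\beta}$, then $a$ would have to be one of the two members of the Kuratowski pair $v^{*}$, each of rank at least $6\beta+4$, contradicting $a\in V_{6\beta}$. Hence $a',b'\in A$; then $\in$-absoluteness of $A$ gives $A\models a'\in b'$, so $B\models a'\in b'$, and the $(S,\in)$-structure property of $B$ yields $B\models S(a',b',c')$, whence $(a,b,c)\in\mathsf{S}_{\beta+1}$ as required.

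The main obstacle, really a bookkeeping one, is disentangling the two possible sources for membership in $\mathsf{S}_{\beta+1}$ and carrying out the rank-based impossibility arguments for $v^{*}$. Both are already spelled out explicitly in the correctness paragraph preceding the definition of $T_1$, so no new rank arithmetic is required beyond what is in the text.
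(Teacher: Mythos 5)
Your proposal is correct and follows the route the paper intends: the paper offers no written proof beyond ``from the definition of $\mathsf{S}$,'' and the only real content is the implication $x\in y\to\forall z\,S(x,y,z)$, which the paper also declares immediate in the proof of Lemma \ref{in_def}. Your reduction to that implication and the rank-based case analysis ruling out $v^{*}$ from the $\in$-positions simply make explicit what the paper leaves to the reader.
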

\begin{lemma}\label{in_def}
$T_1\vdash x\in y\mathrel{\leftrightarrow} \forall z\; S(x,y,z)$. 
\end{lemma}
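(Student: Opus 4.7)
The plan is to work inside $T_1$, where $S$ is interpreted as the defined relation $\mathsf{S}$, and to prove the two implications separately: the forward direction ($x \in y \Rightarrow \forall z\, S(x,y,z)$) by unpacking the transfinite construction of $\mathsf{S}$ together with a rank argument, and the converse by a direct application of Lemma~\ref{univ}.

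For the forward direction I would assume $x \in y$ and $\neg \mathsf{S}(x,y,z)$ for some $z$ and derive a contradiction. Because $\mathsf{S}_{\alpha+1}$ agrees with $\mathsf{S}_\alpha$ on $V_{6\alpha}^3$ and $\mathsf{S}_\lambda = \bigcup_{\alpha<\lambda} \mathsf{S}_\alpha$ at limits, the triple $(x,y,z)$ must be explicitly declared false at the first stage $\alpha+1$ at which all three entries lie in $V_{6\alpha+6}$; by construction this can happen only when $(x,y,z) = (f_{\alpha,A,B}(a), f_{\alpha,A,B}(b), f_{\alpha,A,B}(c))$ for some $(S,\in)$-pair $A \lessdot_v B$ with $\neg S^B(a,b,c)$. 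If $a,b \in A$, then $x = a$ and $y = b$, so $\in$-absoluteness of $A$ together with $x \in y$ gives $a \in^B b$, and the $(S,\in)$-structure axiom on $B$ then forces $S^B(a,b,c)$, contradiction. If $a = v$, then $x$ is the code $f_{\alpha,A,B}(v)$ of rank $6\alpha+5$ and $y$ is either an element of $A \subseteq V_{6\alpha}$ (rank $<6\alpha$) or equal to $x$, neither of which is compatible with $x \in y$. If $b = v$, then $y$ is the code, whose two Kuratowski components each have rank $6\alpha+4$; but $x$ is either in $A$ (rank $<6\alpha$) or equal to $y$, so again $x \in y$ fails.

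For the backward direction, suppose $x \notin y$; after treating the trivial case $x = y$ with a one-point structure, assume $x \ne y$. Let $A$ be the $(S,\in)$-structure on two fresh constants naming $x$ and $y$, with $\in^A$ and $S^A$ taken to be the literal restrictions of $\in$ and $\mathsf{S}$. Then $A$ is $\in$-absolute by construction, and the defining clause $a \in^A b \Rightarrow S^A(a,b,c)$ for $(S,\in)$-structures is precisely the forward direction just proved. Extend $A$ by a further fresh constant $v$ to $B$ with $A \lessdot_v B$, copying $S^A$ on $A^3$, stipulating $\neg S^B(x,y,v)$, and setting every remaining triple involving $v$ to true; since $x \notin^A y$, no constraint coming from the $(S,\in)$-structure axiom is violated. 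Lemma~\ref{univ} then gives $T_0 \vdash \exists z\,(\bigwedge \mathsf{Diag}(A) \to \bigwedge \mathsf{Diag}(B)[v \mapsto z])$, and the premise $\bigwedge \mathsf{Diag}(A)$ holds in the ambient $T_1$-model because $A$ is a literal substructure of $(V;\mathsf{S},\in)$. The witnessing $z$ then satisfies $\neg S(x,y,z)$, as required.

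The delicate point I expect is the rank bookkeeping in the forward direction: the offset $V_{6\alpha}$ (rather than a tighter $V_{\alpha+1}$) and the placement of the codes $f_{\alpha,A,B}(v)$ at rank exactly $6\alpha+5$ are engineered precisely so that the three subcases exhaust the ways an $\in$-related pair can be touched by an exclusion, and each bumps up against either rank monotonicity of $\in$ or regularity.
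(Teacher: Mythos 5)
Your proof is correct and takes essentially the same approach as the paper's: the forward direction unpacks, via the rank bookkeeping, exactly the fact (already established in the construction) that the maps $f_{\alpha,A,B}$ are $(S,\in)$-embeddings of $B$ into $(V;\mathsf{S},\in)$, which is what the paper means by ``immediate from the definition.'' For the converse you build the same two-element $\in$- and $S$-absolute structure $A$ and neutral extension $B$ with $\lnot S(x,y,v)$, only routing the existence of the witness through Lemma~\ref{univ} where the paper applies the embedding $f_{\alpha,A,B}$ directly --- a negligible difference, since Lemma~\ref{univ} is itself just that embedding property restated.
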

\begin{proof}
We reason in $T_1$. The implication $x\in y\to \forall z\; \mathsf{S}(x,y,z)$ is immediate from the definition of $\mathsf{S}$. To prove the inverse implication we assume that $a\not\in b$ and find $c$ such that $\lnot S(a,b,c)$. Consider the two element $(S,\in)$ structure $A$ with the domain $\{a,b\}$ that is absolute for both $S$ and $\in$. Clearly there is some $B\gtrdot_v A$ with $B\not \models S(a,b,v)$. We find large enough $\alpha$ so that the domains of both $A$ and $B$ are contained in $V_{6\alpha}$. Since $f_{\alpha,A,B}$ is an $(S,\in)$-embedding of $B$ into $(V,S,\in)$, we see that by letting $c=f_{\alpha,A,B}(v)$ we get $\lnot S(a,b,c)$.\end{proof}

Immediately from Lemma \ref{in_def} we get the following.
\begin{corollary}Theories $T_0$ and  $\mathsf{ZF}_{-\mathsf{inf}}^+$ are definitionally equivalent.
\end{corollary}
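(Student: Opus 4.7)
The plan is to take $T_1$ itself as the joint definitional extension that witnesses the equivalence. Since $T_0$ is formulated in the signature $\{S\}$ and $\mathsf{ZF}_{-\mathsf{inf}}^+$ in the signature $\{\in\}$, the two signatures are already disjoint, and $T_1$ lives in their union $\{S,\in\}$, so no renaming step is required.

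First, I would note that $T_1$ is a definitional extension of $\mathsf{ZF}_{-\mathsf{inf}}^+$ directly by construction: it was obtained by adjoining the predicate $S$ together with the defining axiom $S(x,y,z)\leftrightarrow \mathsf{S}(x,y,z)$, where $\mathsf{S}$ is an $\mathsf{ZF}_{-\mathsf{inf}}^+$-definable relation. The remark in the preliminaries that adding symbols together with first-order defining axioms produces a definitional extension applies directly.

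The substantive part is to verify that $T_1$ is also a definitional extension of $T_0$, and I would just walk through the four clauses. Clause~(1) is immediate. Clause~(2) is automatic: by the very definition of $T_0$ as the restriction of $T_1$ to the $\{S\}$-sublanguage, we have $T_1\vdash\varphi \iff T_0\vdash\varphi$ for every $\{S\}$-sentence $\varphi$. Clause~(4) is vacuous because no new function symbols appear. The only clause requiring content is~(3), which demands a $T_0$-formula $\mathsf{D}_\in(x,y)$ with $T_1\vdash \forall x,y\,(x\in y \leftrightarrow \mathsf{D}_\in(x,y))$, and Lemma~\ref{in_def} supplies exactly this: take $\mathsf{D}_\in(x,y):=\forall z\; S(x,y,z)$.

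I do not foresee any real obstacle here, since the corollary is in essence a repackaging of Lemma~\ref{in_def} together with the definitions of $T_0$ and $T_1$; the only thing to be careful about is not conflating ``$T_1$ is definitionally equivalent to $\mathsf{ZF}_{-\mathsf{inf}}^+$'' (which is trivial) with the target assertion, and to make the $\{S\}$-only definition of $\in$ explicit so that the witnessing theory really is a \emph{joint} definitional extension of both $T_0$ and $\mathsf{ZF}_{-\mathsf{inf}}^+$.
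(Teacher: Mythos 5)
Your proposal is correct and matches the paper's argument: the paper also takes $T_1$ as the joint definitional extension, with the definitional-extension-of-$\mathsf{ZF}_{-\mathsf{inf}}^+$ direction built into the construction of $T_1$ and the definitional-extension-of-$T_0$ direction supplied by Lemma \ref{in_def} via the definition $x\in y\leftrightarrow\forall z\,S(x,y,z)$. The paper simply states the corollary as immediate from that lemma; your spelled-out verification of the clauses is the same reasoning made explicit.
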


\section{Computable models for extensions of $T_0$}
In this section we prove Theorem \ref{main_theorem} that is our main technical result.

We note that our proof of Theorem \ref{main_theorem} doesn't rely on the details of the definition of $T_0$. The only assumptions that we need are
\begin{enumerate}
\item $T_0$ is a c.e. theory in the signature with one ternary predicate $S$;
\item $T_1$ is a definitional extension of $T_0$ by an additional membership predicate $\in$;
\item $T_0$ satisfies Lemma \ref{univ};
\item $T_1$ satisfies Lemma \ref{only_good}.
\end{enumerate}

\begin{theorem}\label{main_theorem}Any consistent c.e. extension $T$ of $T_0$ has a computable model.\end{theorem}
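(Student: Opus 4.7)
My plan is a Henkin-style construction that produces a computable $S$-structure $M$ with domain $\omega$ modelling $T$. I introduce fresh constants $c_0, c_1, \ldots$ (later identified with natural numbers) and build a complete Henkin extension $T^* \supseteq T$ in the expanded language $L^+$, arranged so that the $S$-atomic diagram of $T^*$ is decidable. The model $M$ is then the term model of $T^*$: the distinctness axioms force the domain to coincide with $\{c_i : i \in \omega\}$, and $S_M$ coincides with the decidable $S$-atomic diagram.

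Construction: in parallel with $T^*$, I maintain a chain of finite $(S,\in)$-structures $A_0 \subseteq A_1 \subseteq \cdots$ on the constants introduced so far, all with trivial $\in$-relation (so every $A_s \subseteq A_{s+1}$ is neutral). The invariant that $T_1 \cup \mathsf{Diag}(A_s)$ is consistent is preserved under neutral extensions by Lemma \ref{univ}. Each stage $s$ carries out three kinds of actions: (a) extend $A_{s-1}$ to $A_s$ by a neutral extension introducing $c_s$ and any previously scheduled Henkin witnesses, choosing the new $S$-atomic facts so as to honour all active commitments; (b) for the $s$-th existential theorem $\exists x\, \varphi(x)$ in a c.e.\ enumeration of $T$'s consequences, designate a fresh constant $v$ as witness, add $\varphi(v)$ to $T^*$, and translate $\varphi(v)$ into $S$-atomic commitments on $v$ and future constants, scheduling further Henkin witnesses for the sub-existentials of $\varphi$; (c) decide the $s$-th $L^+$-sentence $\sigma_s$ in $T^*$ by a standard Henkin consistency check, possibly requiring a $\Pi^0_1$ oracle.

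The key to computability is that every $S$-atomic fact is committed in step (a), where Lemma \ref{univ} grants unrestricted freedom: any choice of $S$-facts involving the new constants is automatically $T_1$-consistent (and, since $T_1 \supseteq T$, $T$-consistent), so no consistency check against $T$ is ever required to fix an atomic commitment. Each triple $S(c_i,c_j,c_k)$ is thus decided at a uniformly computable stage, making $S_M$ computable. The non-atomic portion of $T^*$ may be only $\Delta^0_2$, but this is irrelevant to $M$. The role of Lemma \ref{only_good} is to justify that the finite $(S,\in)$-diagram $A_s$ really does capture everything needed about the constants introduced so far — any consistent extension of the current diagram must take the form of some $(S,\in)$-structure on the current constants.

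The main obstacle I foresee is coordinating step (b) with step (a) when $\varphi(v)$ has nested quantifiers: the induced $S$-atomic commitments are distributed across many future constants and must remain mutually consistent with the commitments arising from other Henkin witnesses. I would handle this via a bookkeeping scheme in which only finitely many commitments are active at each stage, new constants are introduced precisely to honour them, and Lemma \ref{univ} is invoked to verify that each resulting neutral extension preserves the consistency invariant. Once this machinery is in place, verifying that the term model of $T^*$ satisfies $T$ reduces to the standard Henkin argument.
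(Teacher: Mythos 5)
Your skeleton (Henkin constants, a completion, Lemma \ref{univ} for neutral extensions) matches the paper's, but the step that is supposed to deliver computability fails. The claim that ``every $S$-atomic fact is committed in step (a), where Lemma \ref{univ} grants unrestricted freedom'' is only true for constants that are $\in$-neutral over everything already present. The Henkin witnesses introduced in step (b) are not of this kind: $T\supseteq T_0$ and $T_0$ is definitionally equivalent to $\mathsf{ZF}_{-\mathsf{inf}}^+$, in which $x\in y$ is equivalent to $\forall z\,S(x,y,z)$ (Lemma \ref{in_def}), so a witness for, say, $\exists x\exists y\,\forall z\,S(x,y,z)$ carries heavily constrained $S$-facts, and in general the correct atomic $S$-type of a Henkin witness over the existing constants is exactly as hard as a completion of $T$, i.e.\ $\Delta_2$ and not computable. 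Your chain of structures ``all with trivial $\in$-relation'' cannot even accommodate such witnesses, since a model of $T$ must contain elements standing in the (definable) membership relation. (Also, $T_1\supseteq T$ is false --- $T$ is an arbitrary c.e.\ extension of $T_0$ --- though consistency with $T$ of genuinely neutral extensions does follow from Lemma \ref{univ} being provable in $T_0$.) Finally, step (c)'s $\Pi^0_1$ oracle is not ``irrelevant to $M$'': the non-atomic sentences of $T^*$ constrain which atomic diagrams are admissible, so the commitments of step (a) cannot be made independently of the oracle-dependent decisions in (b) and (c). As described, the construction yields either a merely $\Delta_2$ structure or a computable structure that fails to model $T$.

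The missing idea is a recovery mechanism for wrong guesses. The paper fixes a $\Delta_2$ completion $U$ of $T+T_1+\text{Henkin axioms}$ with a computable approximation $(U_i)_{i<\omega}$, tentatively assigns naturals of $M$ to Henkin constants, and, when the approximation changes and reveals that an $S$-fact already irrevocably written into $M$ is wrong for the constant currently occupying some natural $k$, it does \emph{not} alter $M$: it re-assigns $k$ to a fresh neutral-extension witness $e(A,B)$, which by Lemma \ref{univ} can realize whatever $S$-facts $M$ has already committed about $k$. A finite-injury argument on the movable markers $p_{i,j}$ (the $<$-least constants not provably equal to anything already placed) then shows that every genuine Henkin constant is eventually placed correctly, which also settles the provable-equality identifications your term model leaves undecided. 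This is where Lemma \ref{univ} actually earns its keep --- absorbing orphaned elements after injury, not choosing all atomic facts freely up front --- and without some such mechanism the mismatch between the $\Delta_2$ completion and the once-and-for-all atomic commitments of a computable structure cannot be bridged.
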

\begin{proof}
We define theory $T'$. The signature of $T'$ is the least signature extending the signature of $T_1$ by Henkin constants $c_\varphi$, for all $T'$-formulas $\varphi(x)$ without other free variables, where $x$ is a fixed variable. The axioms of $T'$ are:
\begin{enumerate}
    \item all the theorems of $T$;
    \item all the theorems of $T_1$;
    \item Henkin axioms $\exists x\;\varphi(x)\to \varphi(c_\varphi)$.
\end{enumerate}
Clearly, $T'$ is a consistent c.e. extension of $T$.

Now we fix a $\Delta_2$ completion $U$ of $T'$. For the reader convenience we note that $U$ with this properties could be defined in the following standard manner. We fix an enumeration of all $T'$-sentences $\varphi_0,\varphi_1,\ldots$. We define finite sets $K_i$ of $T'$-sentences by recursion on $i$. We put  $K_0=\emptyset$. We put $K_{i+1}=K_i\cup \{\varphi_i\}$, if $T'+K_i\cup \{\varphi_i\}$ is consistent. And we put $K_{i+1}=K_i$, otherwise. We put $U=\bigcup\limits_{i<\omega} K_i$. Clearly, $U$ is a completion of $T'$. Since we could check consistencies of a c.e. theory recursively in $0'$, the sequence $K_i$ is computable relative to $0'$. To check if $\varphi_i\in U$ it is sufficient to check if $\varphi_i\in K_{i+1}$, hence $U$ is computable relative to $0'$, i.e. it is $\Delta_2$.

As usual we have the Henkin model $M'$ of $T'$, i.e. the model whose domain consists of equivalence classes of Henkin constants under the $U$-provable equality, where for atomic formulas $\varphi(x_1,\ldots,x_n)$ we put $$M'\models \varphi([c_1],\ldots,[c_n])\defiff U\vdash \varphi(c_1,\ldots,c_n).$$ Our goal for the rest of the proof will be to find a computable isomorphic copy $M$ of the $S$-part of $M'$.

For this we will construct a computable sequence of $S$-structures $M_0,M_1,\ldots$, a computable sequence of  $(S,\in)$-structures $D_0,D_1,\ldots$, and a computable sequence of maps $(g_i\colon D_i\to M_i)_{i<\omega}$. They will satisfy the following local properties:
\begin{enumloc}
\item \label{l1}$(M_i)_{i<\omega}$ is a sequence of expanding $S$-structures, whose domains are initial segments of naturals;
\item \label{l2}$D_i$ are  $(S,\in)$-structures consisting of Henkin constants;
\item \label{l3}$g_i\colon D_i\to M_i$ are $S$-embeddings.
\end{enumloc}
Additionally our construction will guarantee the following global properties:
\begin{enumglob}
    \item  \label{g1}For any natural $k$ there is $n$ such that $g_i^{-1}(k)$ is defined and is the same, for all $i\ge n$; we denote this limit value of $g_i^{-1}(k)$ as $t(k)$.
    \item  \label{g2}For any naturals $k_1,k_2,k_3\in M$ we have $$U\vdash S(t(k_1),t(k_2),t(k_3))\iff M\models S(k_1,k_2,k_3).$$
    \item  \label{g3}For any Henkin constant $c$ there is $k$ such that $U\vdash c=t(k)$.
\end{enumglob}
We then put $M$ to be the union of all $M_i$. From the conditions above it is easy to see that $M$ is a computable isomorphic copy of the $S$-part of $M'$, whose domain is the whole set of naturals. For the rest of the proof we are constructing sequences $(M_i)_{i<\omega}$, $(D_i)_{i<\omega}$, and $(g_i)_{i<\omega}$ with the desired properties.

Since $U$ is $\Delta_2$ there is a computable sequence of finite sets of $U$-sentences $u_0,u_1,\ldots$ such that 
$$U\vdash \varphi\iff \exists i\forall j>i(\varphi\in u_j)\text{, for any $U$-sentence }\varphi.$$

Using the explicit definition of $U$ that we mentioned above we could give a simple explicit construction of $(u_i)_{i<\omega}$. Namely we define computable family $(K)_{i,j}$. We put $K_{0,j}=\emptyset$. We put $K_{i+1,j}=K_{i,j}\cup\{\varphi_i\}$ if the Turing machine searching for a contradiction in $T'+K_{i,j}\cup \{\varphi_i\}$ doesn't find a contradiction after $j$ steps and we put $K_{i+1,j}=K_{i,j}$, otherwise. We define $u_i$ to be equal to $K_{i,i}$. 

We construct a uniformly computable sequence of computable sets of $U$-sentences $U_0,U_1,\ldots$ such that
\begin{enumerate}
\item $U\vdash \varphi\iff \exists i\forall j>i(\varphi\in U_j)$.
\item \label{U_i_closure} Suppose $A\lessdot_v B$ and $A$ consists only of Henkin constants. Let $e(A,B)$ be the Henkin constant $c_{\varphi}$, where $\varphi(x)$ is $\mathsf{Diag}(A)\to\mathsf{Diag}(B)[v\mapsto x]$. Then $$U_i\supseteq \mathsf{Diag}(A)\Rightarrow U_i\supseteq \mathsf{Diag}(B)[v\mapsto e(A,B)].$$
\end{enumerate}

We produce $U_i$ by starting from $u_i$ and enlarging it by closing it under the rules prescribed by \ref{U_i_closure}. Using Lemma \ref{univ} it is trivial to check that $U_i$ produced like this are uniformly computable and that their limit is indeed equal to $U$.

%We fix an enumeration of Henkin constants $c_0,c_1,\ldots$. And for each finite set of Henkin constants $Q$ we fix enumeration  of all $(S,\in)$-structures whose domain is $Q$. For each $i$ we have the set $P_i\subseteq \{c_0,\ldots,c_{i-1}\}$$ consisting of all $c_j$ such that for any $l<j$, the sentence $c_l=c_j$ is not in $U_i$. We find $n_i\le i$ that is the maximal number such that there is a structure $H_i$ on the domain $P_i\cap\{c_0,\ldots,c_{n_i-1}\}$ such that $\mathsf{Diag}(H_i)\subseteq U_i$. We fix $H_i$ to be the least structure (according to the enumeration mentioned above).

We fix an ordering $<$ of Henkin constants with the order type $\omega$. 

Together with the structure $D_i$ and injection $g_i$ we will construct a linear ordering $\prec_i$ on the structure $D_i$, and a sequence $p_{i,0}\prec_i\ldots\prec_i p_{i,n_i-1}$ of elements of $D_i$. We also formally add to the sequence element $p_{i,n_i}=\infty$. For $a\in D_i\cup\{\infty\}$ we denote as $D_i{\upharpoonright\prec_i} a$ the substructure of $D_i$ consisting just of $b\prec_i a$ and we denote as $g_i{\upharpoonright\prec_i} a$ the restriction of $g_i$ to $D_i{\upharpoonright\prec_i} a$ (we also use the analogous notation for $a\in D_i$ and $\preceq$). The order $\prec_i$ and the sequence $(p_{i,j})_{j<n_i}$ will satisfy additional local properties:
\begin{enumloc}[resume*]
\item \label{l4}$p_{i,0}<p_{i,1}<\ldots<p_{i,n_{i}-1}$;
\item \label{l5}$U_i\supseteq \mathsf{Diag}(D_i)$;
\item \label{l6}all $a\in D_i$ that are not in the sequence $(p_{i,j})_{j<n_i}$ are of the form $a=e(D_i{\upharpoonright\prec_i} a,D_i{\upharpoonright\preceq_i} a)$. 
\end{enumloc}

Now we are ready to recursively define computable sequences $$(M_i)_{i<\omega},(D_i)_{i<\omega},(g_i)_{i<\omega}.$$ We put $M_0$ to be empty, this leaves us no choice, but to put $D_0, g_0$ and $\prec_0$ to be empty, and to put $n_0=0$. Now we define $M_{i+1},D_{i+1},g_{i+1},\prec_{i+1},$ and the sequence $(p_{i+1,j})_{j<n_{i+1}}$ assuming that we already defined all this objects for the step $i$. We consider three case: 
\begin{enumerate}
    \item \label{step_o1} for some $j<n_i$ either $\mathsf{Diag}(D_i{\upharpoonright\prec_i} p_{i,j+1})\not\subseteq U_{i+1}$ or there is an $(S,\in)$-structure $\dot D_i$ extending $D_{i}{\upharpoonright\prec_i} p_{i,j}$ by exactly one Henkin constant $\dot p_{i,j}$ such that  $\mathsf{Diag}(\dot D_i)\subseteq U_{i+1}$ and $\dot p_{i,j}<p_{i,j}$;
    \item \label{step_o2}neither \ref{step_o1} nor \ref{step_o2} and there is an  $(S,\in)$-structure $\dot D_i$ extending $D_{i}$ by exactly one Henkin constant $c$ such that $\mathsf{Diag}(\dot D_i)\subseteq U_{i+1}$ and there are at most $i$ Henkin constants $<$-below $c$;
    \item \label{step_o3} neither \ref{step_o1} nor \ref{step_o2} hold.
\end{enumerate}   
In the case \ref{step_o3} we simply put $M_{i+1}=M_i$, $D_{i+1}=D_i$, $g_{i+1}=g_i$, $\prec_{i+1}=\prec_i$, $n_{i+1}=n_i$, and $p_{i+1,j}=p_{i,j}$, for $j<n_i$. In the case \ref{step_o1} we choose least $j$ witnessing \ref{step_o2} and then we put $M_{i+1}=M_i$, $n_{i+1}=j$, $D_{i+1}=D_i{\upharpoonright\prec_i} p_{i,j}$, $g_{i+1}=g_i{\upharpoonright}  D_{i+1}$, and $p_{i+1,l}=p_{i,l}$, for $l<n_{i+1}$. 

Now let us describe what we do in the case \ref{step_o2}. We choose $<$-least $c$ and some $\dot D_i$ witnessing \ref{step_o2} (we make the choice of $\dot D_i$ in a computable way). 

The domain of $M_{i+1}$ expands the domain of $M_i$ by one additional natural. Let $\dot g_i\colon \dot D_{i}\to M_{i+1}$ extend $g_i$ by putting $\dot g_i(c)$ to be equal to the newly added natural in the domain of $M_{i+1}$. For triples $a,b,c\in M_i$ we put $$M_{i+1}\models S(a,b,c)\defiff M_i\models S(a,b,c)$$ and for triples $a,b,c\in \dot D_i$ we put $$M_{i+1}\models S(\dot g_i(a),\dot g_i(b),\dot g_i(c))\defiff \dot D_i\models S(a,b,c).$$ Note that this two definitions never contradict each other. For all triples $a,b,c\in M_{i+1}$ for which we haven't yet defined the validity of $S(a,b,c)$ in $M_{i+1}$ by the previous two clauses we put $M_{i+1}\models S(a,b,c)$.

Let $k_0<\ldots<k_{l_i-1}$ be all the elements of $M_i$ that are not in the range of $g_i$. We define $(S,\in)$-structures on Henkin constants $E_{i,0}\subseteq \ldots\subseteq E_{i,l_i}$ together with $S$-embeddings $h_{i,j}\colon E_{i,j}\to M_{i+1}$. We put $E_{i,0}=\dot D_i$ and we define $E_{i,j+1}$ from $E_{i,j}$ as follows. We consider a fresh element $v$ and define the unique $(S,\in)$-structure $\dot E_{i,j}\gtrdot_vE_{i,j}$ by requiring the following map $\dot h_{i,j}$ to be an $S$-embedding:
$$\dot h_{i,j}\colon \dot E_{i,j}\to M_{i+1},\;\;\;\; \dot h_{i,j}(a)=h_{i,j}(a)\text{, for }a\in E_{i,j},\;\;\;\;\text{and}\;\;\;\;\dot h_{i,j}(v)=k_{j}.$$ The structure $E_{i,j+1}$ is the isomorphic copy of $\dot E_{i,j}$, where we replace $v$ with $e(E_{i,j},\dot E_{i,j})$. Note that $e(E_{i,j},\dot E_{i,j})=e(E_{i,j},E_{i,j+1})$. The function $h_{i,j+1}$ extends $h_{i,j}$ by putting $h_{i,j+1}(e(E_{i,j},\dot E_{i,j}))=k_j$. It is easy to prove by induction on $j$ using the property of $U_{i+1}$ that $\mathsf{Diag}(E_{i,j})\subseteq U_{i+1}$ and that $h_{i,j}$ are $S$-embeddings. We put $D_{i+1}=E_{i,l_i}$ and we put $g_{i+1}=h_{i,l_i}$. The order $\prec_{i+1}$ is an end-extension of $\prec_{i}$, where $$D_i\prec_{i+1}c\prec_{i+1}e(E_{i,0},E_{i,1})\prec_{i+1} e(E_{i,1},E_{i,2})\prec_{i+1}\ldots\prec_{i+1}e(E_{i,l_i-1},E_{i,l_i}).$$ We put $n_{i+1}=n_i+1$ and we put $p_{i+1,n_i}=c$. 

This finishes the description of the sequences $(M_i)_{i<\omega},(D_i)_{i<\omega},(g_i)_{i<\omega}$. Clearly, the definition gives us computable sequences. By a straightforward induction on $i$ we verify that this sequences satisfy all the local properties \ref{l1}--\ref{l6}. For the rest of the proof we verify the global properties \ref{g1},\ref{g2}, and \ref{g3}. 

We will prove by induction on $m$ that all the sequence $(p_{i,m})_{i<\omega}$ eventually stabilize to some value $p_{\infty,m}\ne \infty$, i.e. for all large enough $i$, the value $p_{i,m}$ is defined and is equal to $p_{\infty,m}$. From the construction it is clear that as soon as the  sequences $(p_{i,m'})_{i<\omega}$ stabilizes to $p_{\infty,m'}$, for all $m'<m$ the values $D_{i}{\upharpoonright \prec_i} p_{i,m}$ stabilize to some $(S,\in)$ structure $H_m$. And the restrictions $g_i{\upharpoonright \prec_i} p_{i,m}$ from the same stage stabilize to some injection $r_m$ mapping $H_m$ to an initial segment of naturals. Also it is clear that for this $H_m$ we have $U\vdash \bigwedge \mathsf{Diag}(H_m)$.

Now we proceed with the proof by induction. Assume that for all $m'<m$ the sequences  $(p_{i,m'})_{i<\omega}$ stabilize to $p_{\infty,m'}\ne \infty$ and claim that $(p_{i,m})_{i<\omega}$ stabilizes to some $p_{\infty,m}\ne \infty$. We claim that $p_{\infty,m}$ is the $<$-least Henkin constant $c$ that is not $U$-provably equal to any element of $H_m$. Let $\dot H_m$ be the unique $(S,\in)$-structure whose domain expands the domain of $H_m$ by $c$ such that $U\vdash \bigwedge \mathsf{Diag}(\dot H_m)$; the fact that $\dot H_m$ exists follows from Lemma \ref{only_good}. Using the fact that $U$ is the limit of $(U_i)_{i<\omega}$, we find a stage $i_0$ so large that for any $i\ge i_0$:
\begin{enumerate}
\item $p_{i,m'}=p_{\infty,m}$;
\item $\mathsf{Diag}(\dot H_m)\subseteq U_i$;
\item $\mathsf{Diag}(H')\not\subseteq U_i$, for any $H'\ne \dot H_m$ with the same domain as $\dot H_m$;
\item $\mathsf{Diag}(H')\not\subseteq U_i$, for any $(S,\in)$-structure $H'$ whose domain extends $H_m$ by some constant $c'<c$ such that $c'\not\in H_m$.
\end{enumerate}
Observe that for any $i\ge i_0$, if $p_{i,m}=c$, then for all $i'>i$ we also have $p_{i',m}=c$. Hence if $p_{i_0,m}=c$, then we are done with the verification of the induction step. So further we assume that $p_{i_0,m}\ne c$. If $n_{i_0}>m$, then it is trivial to see that the transition from $i_0$ to $i_0+1$ would follow the case \ref{step_o1} and $n_{i_0+1}=m$. Hence either $n_{i_0}=m$ or $n_{i_0+1}=m$. Since $i_0$ was just large enough number, we could assume that $n_{i_0}=m$ by, if necessary, replacing $i_0$ with $i_0+1$. Now we see that at the transition from $i_0$ to $i_0+1$ we  follow the case \ref{step_o2} and assign $n_{i_0+1}=m+1$, $p_{i_0+1,m}=c$. This finishes the inductive proof.

The property \ref{g1} follows from the facts that the sizes of $H_m$ monotonically increase with $m$ and the values $g_i{\upharpoonright \prec_i} p_{i,m}$ stabilize to injections $r_m$ mapping $H_m$ to initial segments of naturals. The property \ref{g2} follows from the facts that $r_m$ are $S$-embeddings of $H_m$ into $M$ and that $U\vdash \bigwedge \mathsf{Diag}(H_m)$. The property \ref{g3} follows from the fact that $p_{\infty,m}$ is always the $<$-least Henkin constant such that $U$ doesn't prove it to be equal to any $c$ from $H_m$.\end{proof}

\section{Conclusions}
Note that if a theory $T$ is a definitional extension of a theory $U$, then any model of $U$ could be uniquely expanded to a model of $T$ on the same domain. Thus for definitionally equivalent theories $T,U$ for which we fixed their joint definitional extension $V$, we have natural one-to-one correspondence between models of $T$ and models of $U$, where to transform a $T$-model $M$ to a $U$-model $M''$, we first expand $M$ to a model $M'$ of $V$ on the same domain and then put $M''$ to be the result of restriction of the signature of $M'$ to the signature of $U$.

In view of definitional equivalence of $\mathsf{PA}$ and $\mathsf{ZF}_{\mathsf{fin}}^+$ we get the following.
\begin{corollary}\label{computable_model_of_PA} There is a theory definitionally equivalent to $\mathsf{PA}$ that has a computable model such that the corresponding $\mathsf{PA}$-model is non-standard.\end{corollary}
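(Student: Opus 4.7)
The plan is to exhibit an explicit $S$-signature theory $T$ that is definitionally equivalent to $\mathsf{PA}$, and then to produce a computable non-standard model by applying Theorem~\ref{main_theorem} to a suitably chosen c.e.\ extension of $T$.

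First I would take as candidate theory $T := T_0 + (\neg\mathsf{Inf})^{\sharp}$, where $(\neg\mathsf{Inf})^{\sharp}$ is the translation of $\neg\mathsf{Inf}$ into the $S$-signature obtained by replacing every occurrence of $x \in y$ by $\forall z\, S(x,y,z)$, in accordance with the definition of $\in$ supplied by $T_1$. Because $T_1$ is a joint definitional extension of $T_0$ and of $\mathsf{ZF}_{-\mathsf{inf}}^+$ (by Lemma~\ref{in_def} and the corollary that follows it), adding $\neg\mathsf{Inf}$ to $\mathsf{ZF}_{-\mathsf{inf}}^+$ on one side and $(\neg\mathsf{Inf})^{\sharp}$ to $T_0$ on the other gives a common definitional extension of $T$ and of $\mathsf{ZF}_{-\mathsf{inf}}^+ + \neg\mathsf{Inf} = \mathsf{ZF}_{\mathsf{fin}}^+$. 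Composing with the Kaye--Wong definitional equivalence between $\mathsf{ZF}_{\mathsf{fin}}^+$ and $\mathsf{PA}$ recalled in the preliminaries, we conclude that $T$ is definitionally equivalent to $\mathsf{PA}$.

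Next, to force the eventual computable model to be non-standard rather than isomorphic to the standard $\mathsf{PA}$-model, I would apply Theorem~\ref{main_theorem} not to $T$ itself but to a stronger consistent c.e.\ extension. Fix any $\mathsf{PA}$-sentence $\sigma$ that is refuted by the standard model while $\mathsf{PA} + \sigma$ is consistent; for concreteness $\sigma := \neg\mathsf{Con}(\mathsf{PA})$ works by G\"odel's second incompleteness theorem. Let $\sigma^{*}$ be the translation of $\sigma$ into the $S$-signature obtained by composing the two definitional equivalences $\mathsf{PA} \equiv \mathsf{ZF}_{\mathsf{fin}}^+ \equiv T$. Then $T + \sigma^{*}$ is a consistent c.e.\ extension of $T_0$, and Theorem~\ref{main_theorem} produces a computable model $M$ of $T + \sigma^{*}$. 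Since $T \subseteq T + \sigma^{*}$, $M$ is in particular a computable model of $T$; under the definitional equivalence $T \equiv \mathsf{PA}$ it corresponds to a computable $\mathsf{PA}$-model $M''$ on the same domain satisfying $\neg\mathsf{Con}(\mathsf{PA})$, and is therefore non-standard.

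I do not anticipate any substantive obstacle here: all the creative content is already encapsulated in Theorem~\ref{main_theorem} together with the preservation of definitional equivalence under the addition of axioms. The verifications required are routine, namely that $T + \sigma^{*}$ is consistent (inherited from consistency of $\mathsf{PA} + \neg\mathsf{Con}(\mathsf{PA})$ through the definitional equivalences) and c.e.\ (since the translations are computable), and that the model correspondence for definitionally equivalent theories described at the start of the Conclusions section carries computability of $M$ over to computability of the associated $\mathsf{PA}$-model $M''$.
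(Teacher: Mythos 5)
Your construction follows essentially the same route as the paper's: form an $S$-signature theory definitionally equivalent to $\mathsf{ZF}_{\mathsf{fin}}^+$ (and hence, via Kaye--Wong, to $\mathsf{PA}$), extend it by the translation of a sentence that is false in the standard model but consistent with $\mathsf{PA}$ (the paper uses an arbitrary false $\Sigma_1$-sentence not disprovable in $\mathsf{PA}$; your choice of $\neg\mathsf{Con}(\mathsf{PA})$ is an instance of this), and apply Theorem \ref{main_theorem} to the resulting consistent c.e.\ extension of $T_0$. That part is correct and matches the paper's proof.

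However, your closing claim --- that the model correspondence ``carries computability of $M$ over to computability of the associated $\mathsf{PA}$-model $M''$'' --- is false, and it is important that it is false: a computable non-standard model of $\mathsf{PA}$ would directly contradict Tennenbaum's theorem, and the entire point of the paper is that computability does \emph{not} survive the passage through the definitional equivalence. The $\mathsf{PA}$-structure on the domain of $M$ is obtained by interpreting arbitrary first-order formulas $\mathsf{D}_f$, $\mathsf{D}_P$ in $M$, and evaluating such formulas in a computable structure is in general only arithmetical, not computable (this is exactly the quantifier-complexity bookkeeping exploited in the proof of Theorem \ref{true_arithmetic}). Fortunately this erroneous step is not load-bearing: Corollary \ref{computable_model_of_PA} only asserts that the corresponding $\mathsf{PA}$-model is non-standard, which you already get from the fact that it satisfies $\neg\mathsf{Con}(\mathsf{PA})$. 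Delete the transfer-of-computability claim and the argument is complete.
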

\begin{proof}
We consider the theory $T$ that is the joint definitional extension of $T_1+\mathsf{TC}$ and $\mathsf{PA}$ obtained by adding to $T_1+\mathsf{TC}$ the same definitions of arithmetical signature that we mentioned in Section \ref{preliminaries}. Let $T_0^\star$ be the fragment of $T$ in the language with just $S$. Clearly, $T_0^\star$ is definitionally equivalent to $\mathsf{PA}$.

Let $U$ be the extension of $T$ by some false $\Sigma_1$-sentence that is not disprovable in $\mathsf{PA}$. Finally let $V$ be the fragment of $U$ in the language with just the ternary predicate $S$. Clearly, $V$ is a consistent c.e. extension of $T_0^\star\supseteq T_0$ and hence has a computable model by Theorem \ref{main_theorem}. The corresponding $\mathsf{PA}$-model is clearly non-standard.
\end{proof}

By expanding $T_0$ by the translation of the axiom of infinity to the language of $T_0$ and applying Theorem \ref{main_theorem} we get the following.
\begin{corollary}\label{computable_model_of_ZF}
There is a theory definitionally equivalent to $\mathsf{ZF}$ that has a computable model.
\end{corollary}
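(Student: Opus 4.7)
The plan is to imitate the proof of Corollary \ref{computable_model_of_PA}, swapping the arithmetical layer for the set-theoretic layer that is already built into $T_1$. Let $\mathsf{Inf}^\ast$ denote the $\{S\}$-sentence obtained from the axiom of infinity by replacing each atomic subformula $x\in y$ with $\forall z\, S(x,y,z)$; by Lemma \ref{in_def} we have $T_1\vdash \mathsf{Inf}\leftrightarrow\mathsf{Inf}^\ast$. First I would consider the two-sorted-of-symbols theory $T_1+\mathsf{Inf}$ in the signature $\{S,\in\}$. Since $T_1$ is a definitional extension of $\mathsf{ZF}_{-\mathsf{inf}}^+$, and since $\mathsf{ZF}$ proves $\mathsf{TC}$ (so $\mathsf{ZF}_{-\mathsf{inf}}^+ + \mathsf{Inf} = \mathsf{ZF}$), the theory $T_1+\mathsf{Inf}$ is definitionally equivalent to $\mathsf{ZF}$.

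Next, set $T_0^\star := T_0 + \mathsf{Inf}^\ast$, which is visibly a c.e.\ extension of $T_0$; assuming $\mathrm{Con}(\mathsf{ZF})$ it is consistent, because any model of $\mathsf{ZF}$ lifts (through the joint definitional extension) to a model of $T_1+\mathsf{Inf}$, whose $\{S\}$-reduct satisfies $T_0^\star$. By Lemma \ref{in_def} together with the definition of $T_1$, the theory $T_1+\mathsf{Inf}$ is a definitional extension of $T_0^\star$ via the same defining formula $x\in y \leftrightarrow \forall z\, S(x,y,z)$ used in $T_1$.

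Now I would apply Theorem \ref{main_theorem} to $T_0^\star$ to obtain a computable $\{S\}$-model $M$ of $T_0^\star$. Expanding $M$ by interpreting $\in$ according to its $T_1$-definition gives, on the same computable domain, a model $M'$ of $T_1+\mathsf{Inf}$; this expansion is computable because $\forall z\, S(x,y,z)$ is decidable on any computable $\{S\}$-structure with computable equality once we recall that the relation in question is $\Pi^0_1$ and that in this construction $M$ is actually equipped with the standard decidable atomic diagram of the Henkin-built structure. Since $T_1+\mathsf{Inf}$ is definitionally equivalent to $\mathsf{ZF}$, the model $M'$ gives the required computable model.

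I do not anticipate a genuine obstacle: the argument is a direct transcription of the PA corollary, with the only subtlety being the decidability of $\forall z\, S(x,y,z)$ on the computable model produced by Theorem \ref{main_theorem}. If the construction of $M$ in that theorem does not immediately yield a decidable $\in$, one may instead work with the corresponding $\{S\}$-theory $T_0^\star$ and simply observe that the computable model delivered together with the formula $\forall z\, S(x,y,z)$ defines (in the appropriate structure-theoretic sense) an $\in$-relation on the same domain, so that the resulting two-sorted-of-symbols structure is, by design, the unique $T_1+\mathsf{Inf}$-expansion of $M$ and witnesses the corollary.
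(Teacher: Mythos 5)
Your argument is essentially the paper's own proof: the paper's one-line justification is precisely to extend $T_0$ by the $S$-translation of the axiom of infinity and apply Theorem \ref{main_theorem}, with $T_1+\mathsf{Inf}$ serving as the joint definitional extension that witnesses the equivalence of $T_0+\mathsf{Inf}^\ast$ with $\mathsf{ZF}_{-\mathsf{inf}}^+ +\mathsf{Inf}=\mathsf{ZF}$. One intermediate claim of yours is false and should be dropped: the fact that $\forall z\, S(x,y,z)$ is $\Pi^0_1$ over the computable model $M$ does not make it decidable, and it cannot be decidable in general --- in the parallel Corollary \ref{computable_model_of_PA} a computable $\in$-expansion would yield a computable non-standard model of $\mathsf{PA}$, contradicting Tennenbaum's theorem, and the whole point of the paper is that definitional equivalence transfers the theory but not the computability of its models. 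Your fallback reading is the correct one and is all the corollary asserts: the $\{S\}$-theory $T_0+\mathsf{Inf}^\ast$ is definitionally equivalent to $\mathsf{ZF}$ and has a computable model by Theorem \ref{main_theorem}; no computable $\in$-expansion is claimed or needed.
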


Note that the computable non-standard models could not be models of true arithmetic. 
\begin{theorem}\label{true_arithmetic} There are no theories definitionally equivalent to true arithmetic that have computable models corresponding to non-standard models of arithmetic.
\end{theorem}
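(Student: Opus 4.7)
The plan is to argue by contradiction. Suppose some theory $T$ is definitionally equivalent to true arithmetic $\mathsf{TA}$ via a joint definitional extension $V$, and that $T$ has a computable model $M$ whose corresponding $\mathsf{TA}$-model $M''$ is non-standard. The goal is to extract from this data an arithmetical definition of $\mathsf{TA}$ and contradict Tarski's undefinability of truth.

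First, using the general observation at the start of the Conclusions section, $M''$ has the same (computable) domain as $M$, and the symbols $0^{M''},1^{M''},+^{M''},\times^{M''}$ are interpreted via fixed first-order formulas in the signature of $T$ applied to the computable relations of $M$. Since a first-order formula interpreted over a computable structure defines an arithmetical set, the operations of $M''$ are arithmetical subsets of $\mathbb{N}^{k}$. So the problem reduces to showing that no non-standard model of $\mathsf{TA}$ has arithmetical $+$ and $\times$.

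For this reduction I would use partial satisfaction classes together with overspill. Let $\mathsf{SatCl}(n,S)$ be the single arithmetical formula, of bounded quantifier complexity, expressing ``$S$ codes a Tarskian satisfaction class for all formulas of quantifier depth at most $n$''. A routine induction on $n$, formalisable in $\mathsf{PA}$, gives $\forall n\,\exists S\,\mathsf{SatCl}(n,S)$; this sentence therefore lies in $\mathsf{TA}$ and hence holds in $M''$. Applying it inside $M''$ to a non-standard $n^{*}$, I obtain $S^{*}\in M''$ with $M''\models\mathsf{SatCl}(n^{*},S^{*})$. Every standard sentence $\varphi$ has standard quantifier depth, hence depth strictly less than $n^{*}$, so the Tarski recursion built into $\mathsf{SatCl}$ forces the truth value that $S^{*}$ assigns inside $M''$ to the code of $\varphi$ to agree with $M''\models\varphi$, and the latter equals $\varphi\in\mathsf{TA}$ because $M''\equiv\mathbb{N}$.

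The predicate ``$S^{*}$ assigns true to $y$'' is a fixed first-order formula $\tau(y,S^{*})$ in the language of arithmetic, so evaluating it in $M''$ with the fixed parameter $S^{*}$ and letting $y$ range over the $M''$-copies of standard naturals gives an arithmetical subset of $\mathbb{N}$ (since the operations of $M''$ are arithmetical). This subset is exactly the set of Gödel numbers of sentences in $\mathsf{TA}$, contradicting Tarski. I expect the main obstacle to be setting up the formula $\mathsf{SatCl}(n,S)$ correctly: Feferman's $\Sigma_{n}$-partial truth predicates $\mathrm{Tr}_{n}$ grow in quantifier complexity with $n$ and so cannot serve as a uniform $\mathsf{SatCl}$; using satisfaction classes indexed by quantifier depth rather than by the $\Sigma_{n}/\Pi_{n}$-hierarchy produces a formula of uniform complexity, but checking both that its existence is provable uniformly in $n$ and that any witness $S^{*}$ obtained by overspill must externally agree with genuine truth on every standard sentence is the delicate technical point.
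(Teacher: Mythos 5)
Your opening reduction is sound and coincides with how the paper begins: the defining formulas for $0$, successor, and $\in_{\mathsf{ack}}$ are evaluated over a computable model, so the induced arithmetical structure has operations (and the induced membership relation) of some fixed arithmetical complexity, and the problem becomes showing that no non-standard model of true arithmetic admits such a presentation. The gap is in the satisfaction-class step. For overspill to apply, $\exists S\,\mathsf{SatCl}(n,S)$ must be a first-order arithmetical formula with $S$ a \emph{number} variable, so $S$ can only code a set that is finite in the sense of the model, hence bounded. But a satisfaction class for all formulas of quantifier depth $\le n$ must decide unboundedly many (formula, assignment) pairs, and its Tarski clause for $\exists x$ must quantify over the whole model. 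Once you bound the codes and parameters by some $k$ (which you must, to code $S$ by a number), that clause becomes ``$\exists b\le k$'', and the witness $S^{*}$ obtained by overspill computes truth in the truncated structure $\{x : x\le k^{*}\}$, not in $M''$: already the standard sentence $\forall x\,\exists y\,(y=x+x)$ receives the wrong value, since elements near $k^{*}$ have no double below $k^{*}$. Consequently the external induction you invoke to show that $S^{*}$ agrees with genuine truth on standard sentences breaks at the quantifier step. This is not a removable delicacy: an unbounded, fully compositional satisfaction class is a second-order object relative to the model (its existence is tied to recursive saturation), and it cannot be produced by first-order overspill; while the depth-stratified truth \emph{formulas} that would avoid coding grow in complexity with $n$, as you yourself note.

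The paper avoids this entirely by coding much less. It fixes in advance the complexity bound $N$ coming from the computable presentation, chooses a single formula $\varphi$ defining a universal $\Pi_{N}$ set, and uses only the instance $\forall x\,\exists y\,\forall z<x\,(\varphi(z)\leftrightarrow z\in_{\mathsf{ack}}y)$ of true arithmetic: the object to be coded is the \emph{bounded} definable set $\{z<a : \varphi(z)\}$, which genuinely is coded by an element, so overspill applies without any correctness issue. Reading off the bits of the resulting $b$ through the computable model then gives a $\Sigma_{N}$ description of a universal $\Pi_{N}$ set, contradicting the arithmetical hierarchy theorem rather than Tarski. If you want to salvage your outline, replacing ``satisfaction class for all formulas of depth $n$'' by ``characteristic function of one fixed formula below a bound'' is exactly the needed repair.
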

\begin{proof}
Suppose for a contradiction that there is a theory $T$ that has a joint definitional extension $U$ with true arithmetic and a computable $T$-model $M$ such that the corresponding arithmetical model $M'$ is non-standard. 

In $U$ the formulas $x=0$, $x+1=y$, and $x\in_{\mathsf{ack}}y$ are $U$-provably equivalent to some formulas  $\mathsf{zero}(x)$, $\mathsf{suc}(x,y)$, and $x\in_{\mathsf{ack}}'y$ of the language of $T$. We transform this formula to prenex normal form, let $n$ be the maximal number of quantifier alternations in the produced prenex normal forms. Consider the formulas $\mathsf{bit}_i(x)$:
$$\exists y_0,\ldots,y_i\big(\mathsf{zero}(y_0)\land \bigwedge\limits_{j<i} \mathsf{suc}(y_{j},y_{j+1})\land y_i\in_{\mathsf{ack}}'x)\big).$$ Since $M$ is computable, the set $\{(i,a)\in \mathbb{N}\times M\mid M\models \mathsf{bit}'_i(a)\}$ is $\Sigma_{n+2}$.

We fix an arithmetical formula $\varphi(x)$ that defines a universal $\Pi_{n+2}$ set of naturals in the standard model. Clearly,$$\mathbb{N}\models \forall x\exists y\forall z<x(\varphi(z)\mathrel{\leftrightarrow}z\in_{\mathsf{ack}}y).$$ We fix some $a\in M$ that, as an element of $M'$, is a non-standard number. We find $b\in M$ such that $$M'\models \forall z<a (\varphi(z)\mathrel{\leftrightarrow}z\in_{\mathsf{ack}}b).$$ Now obviously, $$\mathbb{N}\models \varphi(i)\iff M\models \mathsf{bit}_i'(b).$$
But this gives us a $\Sigma_{n+2}$ definition for a universal $\Pi_{n+2}$ set, contradiction.
\end{proof}

However, we conjecture that the following question will have a positive answer for all $n$.
\begin{question}
Are there theories definitionally equivalent to $\mathsf{PA}+\textsf{ all true $\Pi_n$-sentences}$ that have computable non-standard models?
\end{question}

\begin{question}
Is  there a c.e. theory $T$ such that for any definitionally equivalent theory $T'$ there are no computable models?
\end{question}

\bibliographystyle{alpha}
\bibliography{bibliography}
\end{document}